\title{Integral automorphisms of affine spaces over finite fields}
\author{Istv\'an~Kov\'acs}
\address{I.~Kov\'acs, 
IAM and FAMNIT, University of Primorska, Glagolja\v ska 8, 6000 Koper, Slovenia}
\email{istvan.kovacs@upr.si}
\author{Klavdija Kutnar}
\address{K.~Kutnar, 
IAM and FAMNIT, University of Primorska, Glagolja\v ska 8, 6000 Koper, Slovenia}
\email{klavdija.kutnar@upr.si}
\author{J\'anos Ruff}
\address{J.~Ruff, 
Institute of Mathematics and Informatics, 
University of P\'ecs, Ifj\'us\'ag \'utja 6, 7624 P\'ecs, Hungary}
\email{ruffjanos@gmail.com}
\author{Tam\'as Sz\H onyi}
\address{T.~Sz\H onyi, 
Institute of Mathematics, E\"otv\"os University 
\& MTA-ELTE Geometric and Algebraic Combinatorics Research Group,
P\'azm\'any P. s. 1/C, 1117 Budapest, Hungary }
\email{szonyi@cs.elte.hu}
\thanks{{\it 2010 Mathematics Subject Classification.} 20B25. \\ 
\indent {\it Key words and phrases.} finite field, affine space,  integral automorphism.}
\newtheorem{thm}{Theorem}
\newtheorem{lem}{Lemma}
\theoremstyle{remark}
\newtheorem{rem}{Remark}
\def\F{\mathbb{F}}
\def\R{\mathbb{R}}
\def\Q{\mathcal{Q}}
\def\va{\mathbf{a}}
\def\vb{\mathbf{b}}
\def\vx{\mathbf{x}}
\def\vy{\mathbf{y}}
\def\vz{\mathbf{0}}
\DeclareMathOperator{\ag}{AG}
\DeclareMathOperator{\aut}{Aut}
\DeclareMathOperator{\gl}{GL}
\DeclareMathOperator{\ggl}{\Gamma L}
\DeclareMathOperator{\pg}{PG}
\DeclareMathOperator{\soc}{Soc}
\begin{document}

\maketitle

\begin{abstract} 
A permutation of the point set of the affine space $\ag(n,q)$ is called an integral automorphism if it preserves the integral distance defined among the points. In this paper, we complete the classification 
of the integral automorphisms of $\ag(n,q)$ for $n\ge 3$.
\end{abstract}

\section{Introduction}
Throughout the paper $p$ stands for an odd prime.  
Let $\F_q$ be the finite field with $q=p^h$ elements  and $\ag(n,q)$ 
be the $n$-dimensional affine space defined over $\F_q$. The \emph{Euclidean distance} $d$ is defined as 
$$
d(\vx,\vy)=\sum_{i=1}^{n}(x_i-y_i)^2
$$
for the points $\vx=(x_1,\dots,x_n)$ and 
$\vy=(y_1,\dots,y_n) \in \F_q^n$. Two points $\vx$ and $\vy$ are said to be at \emph{integral distance} if $d(\vx,\vy)$ is a square element 
in $\F_q$ including the zero element,
and a set of points is called \emph{integral} if any two of its points are at integral distance. Recently, the finite field analog of the classical problem about integral point sets in $\R^n$ has attracted considerable attention. See, for example, \cite{ISX} and the references therein. Besides integral point sets, permutations, preserving integral distances, are also considered in \cite{KK,KR,K,KM}. 
By an \emph{integral automorphism} of $\ag(n,q)$ we mean any bijective mapping $\gamma : \F_q^n \to \F_q^n$ satisfying 
$$
d(\vx,\vy) \in   \square_q \iff d(\vx^\gamma,\vy^\gamma) \in \square_q
$$ 
for all $\vx,\,\vy \in \F_q^n$. 
Here and in what follows $\square_q$ denotes the set of all square elements of $\F_q$ including the zero element. 
We adopt the notation used in \cite{KK} and denote 
the group of all integral automorphisms by $\aut(\F_q^n)$.

Integral automorphisms of the plane $\ag(2,q)$ 
were determined in \cite{KK,KR,K}. In particular,  
$\aut(\F_q^2)$ was found by Kurz \cite{K} for 
$q\equiv 3\pmod 4,$ and by Kov\'acs and Ruff \cite{KR}  
for $q\equiv 1\pmod 4$. We remark that the special case  
$q=p$ was settled earlier by Kiermaier and Kurz \cite{KK}. 
It turns out that there exist integral automorphisms of $\ag(2,q)$
which are not semiaffine transformations, and this occurs exactly 
when $q\equiv 1\pmod 4$. As for higher dimensions, Kurz and Meyer \cite{KM} described the integral automorphisms which are also semiaffine transformations. In what follows we denote by $\F_q^\times$ the multiplicative group of $\F_q,$ by $\gl(n,q)$ the group of  invertible $n \times n$ matrices with entries from $\F_q,$ and by 
$\sigma$ the semiaffine transformation defined by $(x_1\ldots,x_n) \mapsto (x_1^p,\ldots,x_n^p)$.

\begin{thm}[Kurz and Meyer \cite{KM}]\label{KM} 
If $q=p^h$ and $n \ge 3,$ then the semiaffine transformations contained in $\aut(\F_q^n)$ are given as 
$$
\vx \mapsto a \vx^{\sigma^i}A+\vb
$$
where $a \in \F_q^\times, i \in \{0,\ldots,h-1\}, A \in \gl(n,q)$ 
with $AA^T=I$ and $\vb \in \F_q^n$. 
\end{thm}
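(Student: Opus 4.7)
The proof splits into the easy inclusion and the hard inclusion. The easy one is a direct computation: if $\gamma(\vx)=a\vx^{\sigma^i}A+\vb$ with $AA^T=I$, then
$$d\bigl(\gamma(\vx),\gamma(\vy)\bigr)=a^2\bigl(d(\vx,\vy)\bigr)^{\sigma^i},$$
which lies in $\sq$ precisely when $d(\vx,\vy)$ does, since $a^2\in\sq$ and $\sigma$ permutes $\sq$. The content of the theorem is the converse.

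For the converse, suppose $\gamma(\vx)=\vx^{\sigma^i}M+\vb$ is a semiaffine element of $\aut(\F_q^n)$. Translations lie in $\aut(\F_q^n)$ because $d$ is translation-invariant, and the componentwise Frobenius lies in $\aut(\F_q^n)$ because $d(\vx^{\sigma^i},\vy^{\sigma^i})=d(\vx,\vy)^{\sigma^i}$ and $\sigma$ stabilises $\sq$. Composing $\gamma$ with the inverses of a translation and a Frobenius reduces the problem to the linear case $\gamma(\vx)=\vx M$ with $M\in\gl(n,q)$. Setting $B=MM^T$, symmetric and invertible, and writing $Q_1(\vx)=\vx\vx^T$ and $Q_2(\vx)=\vx B\vx^T$, the hypothesis becomes
$$Q_1(\vx)\in\sq\iff Q_2(\vx)\in\sq\quad\text{for all }\vx\in\F_q^n,$$
and the task is to show $B=a^2I$ for some $a\in\F_q^\times$.

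The key observation is that the product $F(\vx):=Q_1(\vx)Q_2(\vx)$ lies in $\sq$ for every $\vx$: if both $Q_i(\vx)$ are nonzero they belong to the same coset of $\sq^\times$ in $\F_q^\times$ and their product is a square, while if one of them vanishes so does the product. Thus $F$ is a polynomial of degree $4$ whose values on $\F_q^n$ all lie in $\sq$, and a standard Weil-type character-sum argument (applied to $\sum_\vx\chi(F(\vx))$ with $\chi$ the quadratic character) forces $F=c\cdot h^2$ as a polynomial identity, for some $c\in\sq^\times$ and $h\in\F_q[x_1,\ldots,x_n]$. At this point the hypothesis $n\ge 3$ enters decisively: a non-degenerate quadratic form of rank $\ge 3$ is irreducible in the polynomial ring (a factorisation into two linear forms would force the rank to be at most $2$), so $Q_1$ is irreducible. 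Unique factorisation applied to $Q_1Q_2=c\,h^2$ then forces $h=\alpha Q_1$ for some $\alpha\in\F_q^\times$, whence $Q_2=c\alpha^2Q_1$, which is the matrix identity $B=c\alpha^2I$. Setting $a^2=c\alpha^2\in\sq^\times$ and $A=a^{-1}M$ yields $AA^T=I$, as required.

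The principal obstacle is the polynomial lemma just invoked: that a low-degree polynomial over $\F_q$ whose values all lie in $\sq$ must be of the form $c\cdot h^2$ up to a polynomial identity. The Weil estimate handles this uniformly once $q$ is large compared to $\deg F=4$; for small fields such as $\F_3,\F_5,\F_7$ one must verify the conclusion by an elementary argument, most naturally by restricting the squareness condition to $2$-dimensional coordinate subspaces $\vx=e_i+t e_j$ and matching $1+t^2$ against $B_{ii}+2tB_{ij}+t^2B_{jj}$ to read off $B_{ii}=B_{jj}$ and $B_{ij}=0$. The hypothesis $n\ge 3$ is again what makes this reduction available for every pair of coordinates, and it is precisely the reason the phenomenon familiar from the $n=2$ case \cite{KR,K} (the existence of non-semiaffine integral automorphisms when $q\equiv 1\pmod 4$) does not recur here.
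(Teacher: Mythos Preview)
This theorem is quoted in the paper without proof, as a result of Kurz and Meyer~\cite{KM}; the paper supplies no argument of its own for it, so there is no in-paper proof to compare your attempt against.

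On its own merits, your strategy is sound in outline---reduce to a linear map, set $B=MM^T$, observe that $F=Q_1Q_2$ is square-valued, and use irreducibility of $Q_1$ (here $n\ge 3$ is essential) to pass from $F=ch^2$ to $B=c'I$---but the crucial middle step is not justified. The assertion that a ``standard Weil-type character-sum argument'' forces the polynomial identity $F=ch^2$ is not routine: the multivariate Deligne-type bounds need smoothness hypotheses that the degree-$4$ hypersurface $\{Q_1Q_2=0\}$ (a union of two quadrics) fails, while the cleaner Lang--Weil route through the double cover $y^2=F(\vx)$ carries an implied constant depending on the ambient dimension, so at best you obtain the conclusion for $q$ large relative to $n$, not for all $q$ uniformly. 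Your small-$q$ fallback then has to carry real weight, but as written it does not: from the one-parameter family $\vx=e_i+te_j$ you only get that $(1+t^2)(B_{ii}+2tB_{ij}+t^2B_{jj})\in\sq$ for every $t\in\F_q$, and over $\F_3,\F_5,\F_7$ this handful of values is far from pinning down the three unknowns $B_{ii},B_{ij},B_{jj}$ up to a common square factor. Indeed, restricting to a coordinate $2$-plane lands you exactly in the $n=2$ situation---precisely where the paper warns the phenomenon is different---so you would first need an independent proof that the \emph{linear} case of the theorem holds for $n=2$, which you have not supplied. As it stands, the argument has a genuine gap between the Weil regime and the small-field regime.
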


Our goal in this paper is to show that, in contrast with the plane, 
all integral automorphisms of $\ag(n,q)$ are semiaffine 
transformations whenever $n\ge 3$. 
This together with Theorem~\ref{KM} result in the following 
classification theorem.

\begin{thm}\label{T-main}
Let $q=p^h$ for an odd prime $p$ and suppose that $n\ge 3$. 
Then the integral automorphisms of $\ag(n,q)$ are the mappings 
$$
\vx \mapsto a \vx^{\sigma^i}A+\vb
$$
where $a \in \F_q^\times, i \in \{0,\ldots,h-1\}, A \in \gl(n,q)$ 
with $AA^T=I$ and $\vb \in \F_q^n$. 
\end{thm}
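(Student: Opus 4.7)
The plan is to show that any integral automorphism $\gamma$ of $\ag(n,q)$ with $n\ge 3$ is necessarily semi-affine; Theorem~\ref{KM} then yields the stated form. As a preliminary reduction I compose $\gamma$ with the translation $\vx\mapsto\vx-\gamma(\vz)$, which already belongs to the class of Theorem~\ref{KM}, so I may assume $\gamma(\vz)=\vz$. It then suffices to prove that such a normalized $\gamma$ has the form $\vx\mapsto a\vx^{\sigma^i}A$ with $AA^T=I$.

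The key structural observation is that along an affine line $\ell=\{\vx+t\vv:t\in\F_q\}$ one has $d(\vx+s\vv,\vx+t\vv)=(s-t)^2\|\vv\|^2$, so either every pair of distinct points of $\ell$ is at integral distance (when $\|\vv\|^2\in\sq$) or none is. Consequently, once affine lines can be recognized purely from the integral-distance relation, $\gamma$ preserves collinearity, and the fundamental theorem of affine geometry forces $\gamma$ to be semi-affine. Establishing this combinatorial recognition of lines is therefore the heart of the argument, and it is precisely here that the hypothesis $n\ge 3$ must be used, since in the plane case $q\equiv 1\pmod 4$ is known by \cite{KR} to admit integral automorphisms that do not preserve lines at all.

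My route to the recognition step is to bootstrap from the planar classification in \cite{KR,K}. Each plane $\pi$ through the origin carries the integral-distance structure of $\ag(2,q)$, and by comparing the integrality of distances from a pair $\vx,\vy\in\pi$ to points outside $\pi$ one obtains combinatorial constraints that an exotic (non-semi-affine) planar automorphism of $\pi$ cannot satisfy. A convenient auxiliary is the isotropic quadric $\Q=\{\vv:\|\vv\|^2=0\}$, which has projective dimension $n-2\ge 1$ for $n\ge 3$; its rulings are integral lines with all pairwise distances zero, and the invariance of $\Q$ under $\gamma$ (detectable because its non-zero points are exactly the directions every one of whose scalar multiples is integrally related to the origin) supplies strong rigidity.

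The main obstacle is exactly the case $q\equiv 1\pmod 4$, where planar integral automorphisms that are not semi-affine genuinely exist, so the third (and higher) direction must be used essentially to exclude them. Concretely, I expect the delicate step to be showing that a hypothetical exotic action of $\gamma$ on some plane $\pi$ through $\vz$ produces integrality patterns with points outside $\pi$ that no permutation of $\F_q^n$ can match; this is the point at which the extra degree of freedom provided by $n\ge 3$ has to be leveraged. Once exotic behavior is ruled out on every plane, $\gamma$ is semi-affine on each plane, a coherence argument across an overlapping family of planes through the origin glues these restrictions into a single semi-affine map on $\F_q^n$, and Theorem~\ref{KM} then produces the precise form claimed.
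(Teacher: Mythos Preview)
Your outline has two genuine gaps at the crucial points. First, your proposed detection of the isotropic cone $\Q$ is incorrect: the property ``every scalar multiple of $\vv$ is at integral distance from $\vz$'' holds precisely when $\|\vv\|^2\in\sq$, i.e.\ for all of $S_0\cup S_+$, not just for $S_0$, since $d(t\vv,\vz)=t^2\|\vv\|^2$. Distinguishing $S_0$ from $S_+$ using only the integral-distance relation is exactly the hard core of the argument, and the paper settles it (Lemma~\ref{L3}) only after first proving $\soc(G)=E$ via the O'Nan--Scott theorem (Lemma~\ref{L2}) and then invoking Liebeck's classification of rank-$3$ affine groups to exclude the possibility that $G_\vz$ fuses $S_0$ and $S_+$ into a single orbit. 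No elementary combinatorial characterization of $S_0$ is available, and without one your claimed invariance of $\Q$ is unsupported.

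Second, the ``bootstrap from planes'' strategy is circular as written. You have not yet shown that $\gamma$ preserves any affine subspace, so there is no action of $\gamma$ on a plane $\pi$ to analyse: $\gamma(\pi)$ is a priori an arbitrary $q^2$-subset of $\F_q^n$, and the planar classification of \cite{KR,K} cannot be applied to $\gamma|_\pi$. (Even setting this aside, the restriction of $x_1^2+\cdots+x_n^2$ to a $2$-dimensional subspace need not be equivalent to $x_1^2+x_2^2$, and may even be degenerate, so ``each plane carries the structure of $\ag(2,q)$'' already requires care.) The paper sidesteps all of this: once the zero-distance relation is shown to be $\gamma$-invariant, Lester's theorem on cone-preserving maps (Theorem~\ref{L}) delivers semi-affinity directly, with no appeal to the fundamental theorem of affine geometry and no reduction to the planar case. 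Your sketch, by contrast, leaves the decisive step at the level of ``I expect'', which is precisely where the real work lies.
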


\section{Proof of Theorem~\ref{T-main}}
The key part in the proof of Theorem~2 will be to show that every integral automorphism $\gamma \in \aut(\F_q^n)$ satisfies 
\begin{equation}\label{iff}
d(\vx,\vy)=0 \, \iff \, d(\vx^\gamma,\vy^\gamma)=0 \text{ for all } \vx,\, \vy \in \F_q^n.
\end{equation}
This enables us to use 
the result of Lester~\cite{Les} about cone preserving mappings. 
Let $V$ be a nonsingular metric vector space over a field $\F$ whose  characteristic is different from two, upon which is defined a nonsingular symmetric bilinear form $\langle .,.\rangle$.  The \emph{cone $C(\va)$ with vertex} $\va \in V$ is defined to be the set 
$C(\va):=\{ \vx \in V : \langle\vx-\va,\vx-\va\rangle=0 \},$ and a  
mapping $f : V \to V$ is said to \emph{preserve cones} if 
$(C(\va))^f=C(\va^f)$ for all $\va\in V$. 

\begin{thm}[Lester \cite{Les}]\label{L}
Let $V$ be a nonsingular metric vector space over the field $\F$, with bilinear form $\langle.,.\rangle$; assume that $\dim(V) \ge 3$ and that
$V$ is not anisotropic (that is, $\langle \vx,\vx \rangle = 0$ for 
some nonzero vector $\vx$). Let $f : V \to V$ be a bijection of $V$ which preserves cones. Then $f$ is of the form
$$
f : \vx \mapsto L(\vx)+\vb 
$$
where $\vb \in V,$ and $(L,\rho)$ is a semilinear transformation of 
$V$ satisfying $\langle L(\vx),L(\vy) \rangle=a \langle \vx,\vy\rangle^\rho$ for some nonzero $a\in \F$ and for all $\vx,\,\vy\in V$. 
\end{thm}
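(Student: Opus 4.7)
The plan is to normalize $f$ to fix the origin, decode enough of the bilinear form's structure from the $f$-invariant cone incidences to recover linearity up to a field automorphism, and then read off the form condition.

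To begin, I would replace $f$ by $g(\vx) := f(\vx) - f(\vz)$, which is still a bijection of $V$ preserving cones and now fixes $\vz$; setting $\vb := f(\vz)$ at the end reabsorbs the translation. Since $g(C(\vz)) = C(\vz)$, the set $\mathcal{N} := \{\vx \in V : \langle \vx, \vx\rangle = 0\}$ of isotropic vectors is permuted by $g$, and the non-anisotropic hypothesis ensures $\mathcal{N} \ne \{\vz\}$. For $\vu, \vv \in \mathcal{N}$ the identity $\langle \vu - \vv, \vu - \vv\rangle = -2\langle \vu, \vv\rangle$ (valid since $\mathrm{char}\,\F \ne 2$) gives $\vu \in C(\vv) \iff \langle \vu, \vv\rangle = 0$, so $g$ preserves orthogonality on $\mathcal{N}$. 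More generally, membership or non-membership of tuples of points in one another's cones encodes further algebraic relations --- scalar proportionality of isotropic vectors, coplanarity in a totally isotropic flat, and so on --- all of which are therefore preserved by $g$.

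The core step is to recover a semilinear structure. One shows that $g$ induces an incidence-preserving bijection on the collection of isotropic subspaces of $V$: isotropic lines through $\vz$ are mapped to isotropic lines, and totally isotropic $2$-planes to totally isotropic $2$-planes. The hypotheses $\dim V \ge 3$ and $V$ non-anisotropic are used essentially here, since they guarantee that $\mathcal{N}$ spans $V$ and that the null geometry carries enough structure to encode the full projective geometry of $\mathbb{P}(V)$ --- for instance via an embedded hyperbolic plane and the orthogonality relations among its null lines. A version of the fundamental theorem of projective geometry then yields a field automorphism $\rho$ of $\F$ and a semilinear bijection $L : V \to V$ with companion $\rho$ that agrees with $g$ on the isotropic cone; since $V$ is spanned by $\mathcal{N}$, the identity $L = g$ extends to all of $V$.

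Finally, because $g = L$ is semilinear and cone-preserving, the quadratic forms $\vx \mapsto \langle L(\vx), L(\vx)\rangle$ and $\vx \mapsto \langle \vx, \vx\rangle^\rho$ share the same zero locus on $V$; a classical fact says that two nondegenerate quadratic forms defining the same projective quadric on a non-anisotropic space of dimension at least three must be proportional, so $\langle L(\vx), L(\vx)\rangle = a\langle \vx, \vx\rangle^\rho$ for some $a \in \F^\times$. Polarizing delivers $\langle L(\vx), L(\vy)\rangle = a\langle \vx, \vy\rangle^\rho$ for all $\vx, \vy \in V$, and reinstating $\vb$ produces $f(\vx) = L(\vx) + \vb$. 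The main obstacle is the third step --- extracting from cone incidences alone a preserved incidence structure on isotropic subspaces rich enough to invoke a projective-geometry rigidity theorem --- because the dimension and non-anisotropic hypotheses must play an essential role there: in the anisotropic case or in low dimension, the null geometry degenerates and no longer determines the ambient linear structure.
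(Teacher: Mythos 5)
First, a point of context: the paper does not prove this statement at all --- it is Lester's theorem, quoted verbatim from \cite{Les} and used as a black box (the Remark at the end of Section~2 even notes that it was rediscovered by Vroegindewey \cite{V82}, with the Witt index $1$ case treated separately in \cite{V74}). So there is no in-paper proof to compare yours against; I can only assess your sketch on its own terms.

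The endpoints of your outline are sound: the normalization $g(\vx)=f(\vx)-f(\vz)$, the observation that for isotropic $\vu,\vv$ one has $\vu\in C(\vv)\iff\langle\vu,\vv\rangle=0$ (so $g$ preserves orthogonality of isotropic vectors), and the concluding ``same quadric implies proportional forms, then polarize'' step are all correct in spirit. But the core step is a genuine gap, and you have in effect labelled it as such yourself. Two concrete problems. First, when the Witt index of $V$ is $1$, the quadric contains no lines: there are no totally isotropic $2$-planes, any two orthogonal isotropic vectors are already proportional, and the ``null geometry'' you propose to feed into a fundamental-theorem-of-projective-geometry argument degenerates to a bare point set with no incidence structure. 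This is not a marginal case --- for the form $x_1^2+x_2^2+x_3^2$ over $\F_q$ the Witt index is exactly $1$, so your argument breaks down in precisely the dimension $n=3$ that the paper needs; this is why \cite{V74} handles Witt index $1$ by a separate argument. Second, even when the Witt index is at least $2$, a bijection of the isotropic points that preserves the lines lying on the quadric is not yet a collineation of $\mathbb{P}(V)$: you must extend it to \emph{all} points and lines (compare the paper's own Remark, which in the finite-field setting writes every point of $\pg(n-1,q)$ as the intersection of two secant lines of the quadric), and you must then verify that the resulting semilinear $L$ agrees with $g$ pointwise on $V$ rather than merely projectively. None of this is routine; it is where essentially all of the work in Lester's actual proof lies, and your sketch currently asserts it rather than proves it.
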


Now, if $\gamma \in \aut(\Gamma)$ satisfies \eqref{iff}, then 
it preserves cones of the metric vector space 
$V:=\F_q^n$ equipped with the symmetric bilinear form 
$\langle .,. \rangle$ defined by $\langle\vx,\vy\rangle:=\vx \vy^T$ for all vectors $\vx,\,\vy\in V$. Therefore, by 
Theorem~\ref{L}, $\gamma$ is a semiaffine 
transformation, and Theorem~\ref{T-main} follows. 
In fact, we are going to derive \eqref{iff} in the end of this section following two preparatory lemmas.
\medskip

For the rest of the paper we let $G=\aut(\F_q^n),$ $n\ge 3,$ and let $G_\vz$ be  the stabilizer of $\vz$ in $G$ where $\vz=(0,\ldots,0) \in \F_q^n$.  We start by introducing two subgroups of $G$:
\begin{eqnarray*}
E &=& \big\{ \vx \mapsto \vx + \vb : \vb \in \F_q^n \big\}, \\
M &=& \big\{ \vx \mapsto a \vx A : a \in \F_q^\times,  
A \in \gl(n,q)\text{ and }AA^T=I \big\}.
\end{eqnarray*}

Notice that, by Theorem~\ref{KM}, both $E$ and $M$ are subgroups 
of $G$.  The elements of $E$ are also called \emph{translations}. 
Clearly, $E$ is an elementary abelian group of order $p^{hn},$ and it is regular on $\F_q^n$. The group $M$ normalizes $E,$ hence $\langle E,M \rangle = EM$.  

Define the subsets of $\F_q^n$ as  
\begin{eqnarray*}
S_0 &=&  \Big\{ \vx \in \ag(n,q) : \sum_{i=1}^nx_i^2=0, 
\vx \ne \vz \Big\}, \\
S_+ &=&  \Big\{ \vx \in \ag(n,q) : \sum_{i=1}^nx_i^2 \in \square_q \setminus \{0\} \Big\}, \\
S_- &=&  \Big\{ \vx \in \ag(n,q) : \sum_{i=1}^nx_i^2 \notin \square_q \Big\}.
\end{eqnarray*}

\begin{lem}\label{L1}
With the above notation,
\begin{enumerate}[(i)]
\item The $M$-orbits are $\{\vz\}, S_0, S_+$ and $S_-$.   
\item $EM$ is primitive on $\F_q^n$.
\end{enumerate}
\end{lem}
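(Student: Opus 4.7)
The plan for (i) is to exploit the identity $\langle a\vx A, a\vx A\rangle = a^2\langle\vx,\vx\rangle$, where $\langle\vu,\vv\rangle:=\vu\vv^T$, which follows immediately from $AA^T=I$. Because $a^2$ is a nonzero square, multiplication by $a^2$ preserves membership in each of $\{0\}$, $\square_q\setminus\{0\}$, and $\F_q\setminus\square_q$, so the four sets $\{\vz\},\,S_0,\,S_+,\,S_-$ are automatically unions of $M$-orbits. To show transitivity inside each, I would invoke Witt's extension theorem for the nondegenerate quadratic space $(\F_q^n,\langle\cdot,\cdot\rangle)$. Given $\vx,\vy$ in the same piece, one first chooses $a\in\F_q^\times$ with $a^2\langle\vx,\vx\rangle=\langle\vy,\vy\rangle$ (trivially $a=1$ on $S_0$, and on $S_\pm$ such an $a$ exists because the ratio of two elements in a common non-trivial square class of $\F_q^\times$ is a nonzero square), and then Witt extends the one-dimensional isometry $a\vx\mapsto\vy$ to an element $A$ of the orthogonal group $\{A\in\gl(n,q):AA^T=I\}$, producing the required $\gamma\in M$.

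For (ii), the strategy is the usual one for a point-transitive group of the form $E\rtimes M$ with $E$ regular and abelian: blocks of imprimitivity containing $\vz$ are in bijection with subgroups $H$ satisfying $M\le H\le EM$, and these correspond, via $H\mapsto H\cap E$, to $M$-invariant additive subgroups of $E=(\F_q^n,+)$ (with the block itself equal to the subgroup, viewed as a subset of $\F_q^n$). So it suffices to prove that the only $M$-invariant additive subgroups of $\F_q^n$ are $\{\vz\}$ and $\F_q^n$. Since the scalar maps $\vx\mapsto a\vx$ lie in $M$ (take $A=I$), every $M$-invariant additive subgroup is automatically an $\F_q$-linear subspace; and since the orthogonal group $\{A\in\gl(n,q):AA^T=I\}$ sits inside $M$ (take $a=1$), the problem reduces to showing that the natural module of this orthogonal group is irreducible.

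The key step is then a short reflection argument. Let $W$ be a nonzero $M$-invariant subspace and take $\vu\in W$ with some coordinate $u_i\ne 0$. The hyperplane reflection $r_{e_i}\colon\vx\mapsto \vx-2x_ie_i$ lies in the orthogonal group (it is built from the anisotropic vector $e_i$), so $r_{e_i}(\vu)=\vu-2u_ie_i\in W$; subtracting from $\vu$ and dividing by $2u_i\in\F_q^\times$ gives $e_i\in W$. By Witt's theorem the orthogonal group is transitive on vectors of norm $1$, so all standard basis vectors $e_1,\ldots,e_n$ lie in $W$ and hence $W=\F_q^n$.

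The main obstacle I foresee is not any real difficulty but careful bookkeeping: verifying that Witt's theorem genuinely applies (ensured by the oddness of $p$ and the nondegeneracy of $\vx\vy^T$), and setting up the ``blocks containing $\vz$ $\leftrightarrow$ $M$-invariant additive subgroups of $E$'' correspondence cleanly enough to pass between both sides.
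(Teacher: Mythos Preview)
Your argument is correct. For (i) you supply an actual proof via Witt's extension theorem, whereas the paper simply cites \cite[Lemma~3.17]{KM}; your route is the natural one and is essentially what underlies that reference.

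For (ii) the two proofs diverge. The paper invokes the standard criterion \cite[Theorem~3.2A]{DM} that a transitive group is primitive iff every nondiagonal orbital graph is connected; since $E$ is regular, the orbital graphs are the Cayley graphs on $(\F_q^n,+)$ with connection sets $S_0,S_+,S_-$, and connectedness follows once one checks that each $S_\varepsilon$ spans $\F_q^n$. You instead use the block--stabilizer correspondence: blocks through $\vz$ correspond to $M$-invariant additive subgroups of $E$, these are automatically $\F_q$-subspaces because the scalars lie in $M$, and then you prove irreducibility of the natural orthogonal module by a coordinate-reflection trick. The two approaches are closely related (the $\F_q$-span of each $S_\varepsilon$ is an $M$-invariant subspace, so your irreducibility statement immediately yields the spanning claim the paper uses), but yours is more self-contained: it does not rely on knowing in advance that $S_0$ and $S_-$ are nonempty or that they span, and the reflection step makes the irreducibility completely explicit. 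The paper's version is shorter once one accepts the spanning assertion without further comment.
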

\begin{proof}
Part (i) is proved in \cite[Lemma~3.17]{KM}.

To settle (ii) we apply \cite[Theorem~3.2A]{DM}, that is, 
$EM$ is primitive if and only if Graph$(\Delta)$ is connected for each nondiagonal orbital $\Delta$ of $EM$.  
Observe that, a nondiagonal orbital $\Delta$ consists of the ordered pairs in the form $(\vx,\vx+\vy),$ where $\vx$ runs over $\F_q^n$ 
and $\vy$ runs over $S_\varepsilon$ for a fixed $\varepsilon \in 
\{0,+,-\}$. Now, the connectedness of Graph$(\Delta)$ follows because each of $S_0, S_+$ and $S_-$ spans the vector space $\F_q^n$.
\end{proof}

By Lemma~\ref{L1}(i), $EM$ has nontrivial subdegrees 
$|S_\varepsilon|, \varepsilon \in \{0,+,-\}$. The exact values were 
computed in \cite[Theorem~4.3]{KM}:
\begin{eqnarray}\label{carS}
|S_0|&=& 
\begin{cases} 
q^{n-1}-1 & \text{ if $n$ is odd} \\
q^{n-1}+(-1)^{\frac{\varepsilon n}{2}}q^{\frac{n}{2}}-(-1)^{\frac{\varepsilon n}{2}}q^{\frac{n-2}{2}}-1 & \text{ if $n$ is even} 
\end{cases}  \label{S0} \\
|S_+|&=&  
\begin{cases} 
\frac{1}{2}\Big(q^n-q^{n-1}+(-1)^{\frac{\varepsilon(n+3)}{2}}
q^{\frac{n+1}{2}}-
(-1)^{\frac{\varepsilon(n-1)}{2}}
q^{\frac{n-1}{2}}\Big) & \text{ if $n$ is odd} \\
\frac{1}{2}\Big(q^n-q^{n-1}-(-1)^{\frac{\varepsilon n}{2}}
q^{\frac{n}{2}}+
(-1)^{\frac{\varepsilon n}{2}}
q^{\frac{n-2}{2}}\Big) & \text{ if $n$ is even} 
\end{cases} \label{S+} \\
|S_-|&=&
\begin{cases} 
\frac{1}{2}\Big(q^n-q^{n-1}-(-1)^{\frac{\varepsilon(n+3)}{2}}
q^{\frac{n+1}{2}}+
(-1)^{\frac{\varepsilon(n-1)}{2}}
q^{\frac{n-1}{2}}\Big) & \text{if $n$ is odd} \\ 
\frac{1}{2}\Big(q^n-q^{n-1}-(-1)^{\frac{\varepsilon n}{2}}
q^{\frac{n}{2}}+
(-1)^{\frac{\varepsilon n}{2}}
q^{\frac{n-2}{2}}\Big) & \text{if $n$ is even} \label{S-}
\end{cases}
\end{eqnarray}
where $\varepsilon=0$ if $q \equiv 1\pmod 4$ and 
$\varepsilon=1$ otherwise.

The set $S_0 \cup S_+$ consists of the points being at integral distance from $\vz$. Therefore, every $\gamma \in G_\vz$ 
maps $S_0 \cup S_+$ to itself, and this together with 
Lemma~\ref{L1}(i) leave us with two possibilities 
for the nontrivial $G_\vz$-orbits, namely, these are either     
$S_0,S_+$ and $S_-,$ or $S_0 \cup S_+$ and $S_-$.  In particular, 
the group $G$ has rank either $3$ with nontrivial subdegrees 
$|S_0|+|S_+|$ and $|S_-|,$ or $4$ with nontrivial subdegrees 
$|S_0|, |S_+|$ and $|S_-|$.

As the next step, we find the \emph{socle} $\soc(G)$. 
Recall that $\soc(G)$ is the subgroup of $G$ generated by 
all its minimal normal subgroups. 

\begin{lem}\label{L2}
With the above notation, the socle $\soc(G)=E$.
\end{lem}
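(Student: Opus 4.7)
The strategy is to exploit the primitivity of $G$ inherited from $EM$. By Lemma~\ref{L1}(ii), $EM\le G$ is primitive on $\F_q^n$, so $G$ itself is primitive; hence $N:=\soc(G)$ is transitive, and $G$ has rank $3$ or $4$ with subdegrees drawn from \eqref{S0}--\eqref{S-}. The proof then splits into two steps: first, identify $N$ as elementary abelian and regular of order $q^n=p^{hn}$; then identify this $N$ with $E$.

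The first step is the principal obstacle of the proof. The degree being the prime power $p^{hn}$, the O'Nan--Scott theorem rules out the diagonal and holomorph socle types, whose degrees are never prime powers. I would eliminate the remaining non-affine possibilities (almost simple, product action, twisted wreath) by invoking the CFSG-based classification of primitive permutation groups of prime-power degree (Guralnick) together with the known classifications of rank-$3$ and rank-$4$ primitive groups (Liebeck--Saxl and others), checking in each case that the subdegree multiset of the candidate group is incompatible with the pattern prescribed by \eqref{S0}--\eqref{S-}. Once the non-affine cases are eliminated, $N$ is an abelian transitive normal subgroup of a primitive group, which forces $N$ to be elementary abelian and regular of order $q^n$.

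The second step identifies $N$ with $E$. Here the decisive observation is that $EM\cap N$ is a normal subgroup of $EM$, since $N$ is normal in $G$. But $EM$ is primitive of affine type, so its unique minimal normal subgroup is $E$; consequently $EM\cap N$ is either trivial or contains $E$, and in the latter case $|N|=|E|=q^n$ gives $E=N$ at once. To rule out the trivial alternative $EM\cap N=1$, note that in this case the canonical projection $EM\hookrightarrow G/N\cong G_\vz$ would be injective, producing an elementary abelian $p$-subgroup $\pi(E)$ of $G_\vz$ of order $q^n$ all of whose elements fix $\vz$; since $E\cap M=1$, also $\pi(E)\cap M=1$ and $\pi(E)M\le G_\vz$ is an isomorphic copy of $EM$ of order $q^n|M|$ fixing $\vz$. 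A complementary $M$-irreducibility observation is available as cross-check: $E\cap N$ is $M$-invariant (both $E$ and $N$ are $M$-stable, the latter as a characteristic subgroup of $G$), and the $M$-action $\vb\mapsto a\vb A$ on $E\cong\F_q^n$ is $\F_q$-irreducible for $n\ge 3$ (since $\F_q^\times\le M$ forces every $M$-invariant $\F_p$-subspace to be an $\F_q$-subspace, and $O(n,q)\le M$ acts irreducibly on $\F_q^n$), so $E\cap N\in\{1,E\}$, and the case $E\cap N=1$ is incompatible with the orbit structure that $\pi(E)M$ would impose on $\F_q^n\setminus\{\vz\}$ in view of the subdegrees \eqref{S0}--\eqref{S-}. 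This yields $N=E$, and hence $\soc(G)=E$.
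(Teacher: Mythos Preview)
Your two-step outline matches the paper's own structure exactly: first use the O'Nan--Scott theorem together with Guralnick's prime-power-degree result to force $G$ into the affine case, then identify the regular abelian socle with $E$. For the first step the paper does essentially what you sketch, but it handles the product-action case (your ``remaining non-affine possibilities'') more economically: rather than invoking full rank-$3$/rank-$4$ classifications, it uses the elementary bound $r_G\ge\binom{r_U+k_2-1}{k_2}$ to force $k_2\in\{2,3\}$ and then compares the resulting wreath-product subdegrees $(k_2)(q^{n/k_2}-1),\ldots$ directly against $|S_-|$ from \eqref{S-}. Your appeal to the Liebeck--Saxl lists would also work, but you have only asserted ``checking incompatibility'' without doing it, so this part is a sketch rather than a proof.

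The genuine gap is in your second step. You correctly observe that $EM\cap N\trianglelefteq EM$ and that $E$ is the unique minimal normal subgroup of the primitive affine group $EM$, so either $E\le N$ (and then $E=N$ by orders) or $EM\cap N=1$. But your exclusion of $EM\cap N=1$ is not an argument: you produce $\pi(E)M\le G_\vz$ of order $q^n|M|$ and then simply declare this ``incompatible with the orbit structure \ldots\ in view of the subdegrees'' without saying which orbits, of which group, on which set, or why the sizes clash. Nothing in \eqref{S0}--\eqref{S-} obviously contradicts the existence of such a subgroup of $G_\vz$, and your $M$-irreducibility ``cross-check'' only re-derives $E\cap N\in\{1,E\}$, which you already had. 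The paper itself does not spell this step out either (it cites \cite{KR}), but a clean way to close the gap is this: $EN$ is a nontrivial $p$-group (as $N\trianglelefteq G$ and both $E,N$ are $p$-groups), so $Z(EN)\ne 1$; on the other hand any regular abelian subgroup of $\mathrm{Sym}(\F_q^n)$ is self-centralising, so $Z(EN)\le C_{\mathrm{Sym}}(E)\cap C_{\mathrm{Sym}}(N)=E\cap N$. Hence $E\cap N\ne 1$, and your irreducibility argument then gives $E=N$.
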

\begin{proof}
Let $H=\soc(G)$. 
Since $EM\le G$ is primitive, see Lemma~\ref{L1}(ii), $G$ is 
primitive as well.  Thus $H$ is a direct product of isomorphic simple groups (see \cite[Corollary~4.3B]{DM}), and we may write 
$H=T \times \cdots \times T = T^k$ for some simple group  
$T$ and $k \ge 1$. By the O'Nan-Scott theorem, $G$ and $H$ are 
described by one of the following types 
(see, for example, \cite[pp.~137]{DM}): 
\begin{enumerate}[(T1)]
\item  $H$ is an elementary abelian $p$-group of order $q^n$ 
which is regular on $\F_q^n$.
\item $H$ is nonabelian and regular on $\F_q^n$. 
\item $H = T$  is nonabelian, it is not regular on $\F_q^n,$ 
and $G \le \aut(H)$.
\item $H$ is nonabelian and $G$ is a subgroup of a wreath product 
with the diagonal action. In this case $k \ge 2$ and $|T|^{k-1}=q^n$. 
\item $H$ is nonabelian, $k = k_1k_2$ and $k_2 > 1$. The group  
$G$ is isomorphic to a subgroup of the wreath product $U \, wr \, S_{k_2}$ with the product action, where $U$ is a primitive permutation 
group of degree $d$ such that $q^n=d^{k_2},$ $U$ has socle $T^{k_1},$ and $U$ is of type (T3) or (T4).
\end{enumerate}

We show below that $G$ is of type (T1). 
It is not hard to show that this yields $H=E$ (see, for example, \cite{KR}).  Now, suppose to the contrary that $G$ is described by one of types $(T2)-(T5)$. In each case $T$ is a nonabelian simple group. 
This observation excludes at once types (T2) and (T4).

Suppose next that $G$ is of type (T3). 
Then $T = H$, and since it is a normal subgroup of a primitive group, it acts transitively on $\F_q^n$.  It was proved 
by Guralnick \cite{G} that, if a finite nonabelian simple group $L$ acts transitively on a set $\Omega$ such that 
$|\Omega|$ is a  prime power, then $L$ acts $2$-transitively unless $L \cong \mathrm{PSU}(4,2)$ and 
$|\Omega| = 27$ with nontrivial subdegrees $10$ and $16$  (see \cite[Corollary~2]{G}). 
Since $G$ cannot be $2$-transitive, $q^n=27$ and the nontrivial subdegrees of $G$ are $10$ and $16$. This, however, contradicts 
that $|S_-|=12$ is a subdegree, see the remark before 
the lemma and \eqref{S-}. 

We are left with the case that $G$ is of type (T5). 
Denote by $r_G$ and $r_U$ the rank of $G$ and $U$, respectively.
Recall that $r_G \in \{3,4\}$.
By \cite[Exercise~4.8.1]{DM}, 
\begin{equation}\label{eq:bound}
r_G \ge { r_U + k_2 - 1 \choose k_2}.
\end{equation}

The group $U$ is of type (T3) or (T4). In the latter case  $|T| = p^a$ for some $a,$ a contradiction.  Thus $U$ is of type (T3), $k_1 = 1,$ 
$k = k_2$ and 
$T$ is a transitive permutation group of a set $X$ of size $|X|=q^{n/k_2}$. By the aforementioned result of Guralnick, 
$U$ is $2$-transitive unless $T \cong \mathrm{PSU}(4,2),$ 
$q^{n/k_2} = 27,$ and $r_U=3$. In the latter case, however,  we find in 
\eqref{eq:bound} that $r_G \ge \frac{1}{2}(k_2+2)(k_2+1) \ge 6$ (recall that 
$k_2 > 1$), a contradiction. 
 Thus $r_U = 2,$ implying  in \eqref{eq:bound} that $k=k_2=2$ and $r_G=3,$ or 
$k=k_2=3$ and $r_G=4$. 
\medskip

\noindent {\it Case 1.} $k_2 = 2,$ $r_G=3$ and $G \le U \, wr \, S_2$. 
\medskip

The wreath product $U \, wr \, S_2$ acts by the product action. 
This means that $\F_q^n$ can be written as $\F_q^n=X \times X,$ 
$|X|=q^{n/2},$ and $U$ is a permutation group of $X$. We have $U \, wr \, S_2 = \langle U \times U, \tau \rangle=
\langle U \times U \rangle \rtimes \langle \tau \rangle,$ where  
$U \times U$ acts on $X \times X$ naturally, and $\tau$ acts by switching the coordinates. The socle $H = T \times T \le U \times U,$  and since $T$ is $2$-transitive on $X,$ 
$\Delta_1:=\{ (x_0,x) : x \in X \setminus \{x_0\} \}$ and 
$\Delta_2:=\{ (x,x_0) : x \in  X \setminus \{x_0\} \}$ are 
orbits under the stabilizer $(U \times U)_{(x_0,x_0)},$ and 
any other orbit different from $\{ (x_0,x_0)\}$ is contained in the 
set $\Delta_3:=\{ (x,y) : x,y \in  X \setminus \{x_0\} \}$. 
Now, $G_{(x_0,x_0)}=(U \times U)_{(x_0,x_0)} \rtimes \langle \tau 
\rangle,$ and this gives that any $G_{(x_0,x_0)}$-orbit different 
from $\{ (x_0,x_0)\}$ is contained in either 
$\Delta_1 \cup \Delta_2$ or $\Delta_3$. Since the rank 
$r_G=3,$ we find that 
the nontrivial subdegrees of $G$ are 
$|\Delta_1 \cup \Delta_2|=2(q^{n/2}-1)$ and 
$|\Delta_3|=(q^{n/2}-1)^2$. On the other hand $|S_-|$ is a subdegree which is divisible by $q,$ see \eqref{S-} (we use here that $n\ge 3$).
\medskip

\noindent {\it Case 2.}  $k_2=3,$ $r_G=4$ and $G \le U \, wr \, S_3$. 
\medskip

In this case $\F_q^n$ can be written as $\F_q^n=X \times X \times X,$ $|X|=q^{n/3},$ and $U$  is a permutation group of $X$. The wreath product  $U \, wr \, S_3 = \langle U \times U \times U \rangle \rtimes K,$ where $U \times U \times U$ acts on $X \times X \times X$ naturally, 
$K\cong S_3,$ and $K$ acts by permuting the coordinates. 
The socle $H = T \times T \times T \le U \times U \times U$  and 
$T$ is $2$-transitive on $X$. 
Now, $G_{(x_0,x_0,x_0)} \le (U \times U \times U)_{(x_0,x_0,x_0)} \rtimes K,$ and this gives that any $G_{(x_0,x_0,x_0)}$-orbit different 
from $\{ (x_0,x_0,x_0)\}$ is contained in one of the sets 
$\{ (x,x_0,x_0), (x_0,x,x_0), (x_0,x_0,x) : x \in X \setminus 
\{ (x_0,x_0,x_0)\}\},$ 
$\{ (x,y,x_0), (x,x_0,y), (x_0,x,y) : x,y \in X \setminus \{ (x_0,x_0,x_0)\}\}$ and 
$\{ (x,y,z): x,y,z \in X \setminus \{ (x_0,x_0,x_0)\}\}$.
Because of this and $r_G=4$ we find that the nontrivial subdegrees of 
$G$ are  $3(q^{n/3}-1),\, 3(q^{n/3}-1)^2$ and $(q^{n/3}-1)^3$.
On the other hand these subdegrees are $|S_\varepsilon|,\,\varepsilon\in\{0,+,-\}$, and as $q^{\lceil\frac{n-2}{2}\rceil}$ 
divides both $|S_+|$ and $|S_-|$ and $hn$ is divisible by $3,$ 
we obtain that $(q,n)=(3,3),$ and therefore, $U\cong S_3$ 
and $T\cong \mathbb{Z}_3,$ contradicting that $T$ is nonabelian. 
\end{proof}

We are ready to settle (1). 

\begin{lem}\label{L3}
Let $\gamma\in \aut(\F_q^n)$ be an arbitrary automorphism 
and let $n\ge 3$. Then $\gamma$ satisfies \eqref{iff}. 
\end{lem}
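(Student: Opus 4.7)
The plan is to reduce \eqref{iff} to a property of the point stabilizer $G_\vz$, and then to rule out the one remaining possible orbit configuration by a counting obstruction.

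\emph{Reduction via additivity.} Lemma~\ref{L2} gives $\soc(G)=E$, so $E\trianglelefteq G$. For any $h\in G_\vz$ and $\va\in\F_q^n$, the conjugate $h^{-1}t_\va h$ lies in $E$ and must therefore equal $t_{\va^h}$; evaluating at an arbitrary $\vx$ forces the additivity
$$
(\vx+\va)^h=\vx^h+\va^h.
$$
Decomposing an arbitrary $\gamma\in G$ as $\gamma=h\,t_\vb$ with $\vb=\vz^\gamma$ and $h\in G_\vz$, we obtain $\vy^\gamma-\vx^\gamma=(\vy-\vx)^h$, and hence $d(\vx^\gamma,\vy^\gamma)=\langle(\vy-\vx)^h,(\vy-\vx)^h\rangle$. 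Condition \eqref{iff} is therefore equivalent to the assertion that every $h\in G_\vz$ stabilises $S_0$ setwise.

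\emph{Orbit configurations.} By the remark preceding Lemma~\ref{L2}, the nontrivial $G_\vz$-orbits are either $S_0,\,S_+,\,S_-$ (in which case the reduction above already yields \eqref{iff}) or $S_0\cup S_+$ and $S_-$. Assume the latter, for contradiction. Then $G$ acts with rank $3$ on $\F_q^n$, and the integral distance graph $\Gamma$ --- the Cayley graph on $(\F_q^n,+)$ with connection set $S_0\cup S_+$ --- is strongly regular. Writing $Q(\vx)=\langle\vx,\vx\rangle$ and $S_0^*=S_0\cup\{\vz\}$, the number
$$
\lambda(\vw)=\bigl|\{\vv\in\F_q^n:Q(\vv)\in\sq,\,Q(\vv-\vw)\in\sq,\,\vv\notin\{\vz,\vw\}\}\bigr|
$$
of common $\Gamma$-neighbours of $\vz$ and $\vw$ must therefore take one common value on all of $S_0\cup S_+$.

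\emph{Counting obstruction.} Using the quadratic character $\chi$ of $\F_q$ (extended by $\chi(0)=0$), the indicator of $\sq$ is $\tfrac{1}{2}(1+\chi(x))+\tfrac{1}{2}\delta_0(x)$, and $\lambda(\vw)+2$ decomposes into four pieces: the double-character sum $\sum_\vv\chi(Q(\vv))\chi(Q(\vv-\vw))$, two mixed Gauss sums supported on $S_0^*$, and the doubly-isotropic count $|S_0^*\cap(\vw+S_0^*)|$. The last count is qualitatively different in the two cases: for $\vw\in S_0$ it is the number of isotropic vectors lying in the hyperplane $\vw^\perp$ (which itself contains $\vw$, because $\vw$ is isotropic), whereas for $\vw\in S_+$ it is the number of isotropic vectors on the affine hyperplane $\{\vv:\langle\vv,\vw\rangle=Q(\vw)/2\}$, which does not pass through $\vz$. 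The remaining three pieces are classical quadric Gauss sums and depend only on $Q(\vw)$. Evaluating all four and comparing gives $\lambda_{S_0}\ne\lambda_{S_+}$, contradicting the rank-$3$ assumption and completing the proof.

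\emph{Main obstacle.} The technical core is carrying out the Gauss-sum evaluations with enough precision to confirm that no numerical coincidence absorbs the discrepancy coming from the doubly-isotropic piece, separately according to the parity of $n$ and to $q\pmod{4}$. As a sanity check, in the smallest case $\ag(3,3)$ a direct enumeration yields $\lambda=7$ for $\vw\in S_0$ versus $\lambda=5$ for $\vw\in S_+$, matching the expected discrimination.
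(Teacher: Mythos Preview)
Your route is genuinely different from the paper's. After the same reduction to the rank-$3$ configuration (via $E\trianglelefteq G$; your additivity argument is exactly the mechanism behind the paper's quick disposal of the case $q=p$), the paper does \emph{not} attack the strongly regular graph directly. Instead it invokes Liebeck's classification of primitive affine rank-$3$ groups and checks, class by class (infinite classes (A1)--(A11), the `extraspecial' classes (B), and the `exceptional' classes (C)), that the subdegrees $|S_0|+|S_+|$ and $|S_-|$ never match any entry in Liebeck's tables. This is heavy machinery (Liebeck's theorem uses CFSG), whereas your proposed Gauss-sum comparison of $\lambda_{S_0}$ and $\lambda_{S_+}$ would be entirely elementary and self-contained --- a real gain if it goes through.

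That said, as written the proposal is not a proof but a programme: the decisive step, ``evaluating all four and comparing gives $\lambda_{S_0}\ne\lambda_{S_+}$'', is asserted rather than carried out, and you explicitly flag the case split (parity of $n$, $q\bmod 4$) as the ``main obstacle'' still to be overcome. The single sanity check at $(q,n)=(3,3)$ is encouraging but far from sufficient; one must verify that the discrepancy from the doubly-isotropic piece (which is of order $q^{n-2}$ versus an affine-quadric count) is never cancelled by the character-sum terms across all regimes. Until those four evaluations are written down and the inequality checked in each case, there is a genuine gap. If you can complete the computation, the argument would be a nice CFSG-free alternative to the paper's; if not, the paper's appeal to Liebeck is what actually closes the lemma.
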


\begin{proof}
Suppose for the moment that $q=p$. By Lemma~\ref{L2}, 
$E=\soc(G),$ in particular, $E$ is normal in $G$. Now, since $q=p,$
we obtain that $\gamma$ is an affine transformation, and this implies that  
it satisfies \eqref{iff}.

From now on it will be assumed that $q\ne p$. 
Assume to the contrary that there exist vectors $\va$ and $\vb$ 
such that either $d(\va,\vb)=0$ and $d(\va^\gamma,\vb^\gamma)\ne 0,$ or $d(\va,\vb) \ne 0$ and $d(\va^\gamma,\vb^\gamma)=0$.
Here we deal only with the first case because the second one  
can be treated in a very similar way.
Consider the product $\gamma':=\gamma_1\gamma\gamma_2$  where $\gamma_1$ and $\gamma_2$ are the translations 
$\vx \mapsto \vx+\va$ and $\vx\mapsto \vx-\va^\gamma,$ 
respectively. Then $\vz^{\gamma'}=\vz,$ $\vb-\va \in S_0,$ and 
$(\vb-\va)^{\gamma'}=\vb^\gamma-\va^\gamma \in S_+$.     
These imply that the $G_{\vz}$-orbits are $\{\vz\},S_0 \cup S_+$ and 
$S_-$ (see also the remark before Lemma~\ref{L2}), and thus 
$G$ has nontrivial subdegrees: 
\begin{equation}\label{subdeg}
|S_0|+|S_+| \text{ and } |S_-|.
\end{equation}

By Lemma~\ref{L2}, $G$ is of type (T1), that is, it is an 
affine permutation group. Therefore, we can use the classification of finite primitive affine permutation groups of rank $3$ due to Liebeck 
(see \cite[Theorem~1.1]{Lieb}). Namely, if $L$ is such a group acting an a vector space $V$ of cardinality $p^d,$ and $L_0$ 
denotes the stabilizer of the zero vector $0,$ then one of the following holds: 
\medskip

\noindent 
\emph{Infinite classes (A):} \ $L$ is in one of $11$ infinite classes of permutation groups labeled by (A1)--(A11).  
If $L$ is in class (A1), then $L_0$ is isomorphic to a subgroup of 
$\ggl(1,p^d);$ and if $L$ is in class (A2)--(A11), then $d=2r$ and 
$L$ has nontrivial subdegrees listed  in  Table~2 
(see~\cite[Table~12]{Lieb}).
\medskip

\begin{table}[t!]\label{table:L}
\begin{center} 
{\small 
\begin{tabular}{|c|c|c|} \hline
row     &  subdegrees                                             & conditions\\ \hline
        &                                                         & $s=0$ or \\
        &                                                         & $s \mid r$ or \\ 
1.      &  $(p^s+1)(p^r-1)$, \; $p^s(p^r-1)(p^{r-s}-1)$           & $s=2r/5$ and $5 \mid r$ or \\ 
        &                                                         & $s=3r/4$ and $4 \mid r$ or \\ 
        &                                                         & $s=3r/8$ and $8 \mid r$ \\   \hline       
2.      &  $(p^{r-s}+1)(p^r-1)$, \; $p^{r-s}(p^r-1)(p^s-1)$       &  $s \mid r$   \\  \hline
3.      &  $(p^{r-s}-1)(p^r+1)$, \; $p^{r-s}(p^r+1)(p^s-1)$       &  $s \mid r$ and $s \ne r$ \\  \hline
\end{tabular}
}
\end{center}
\caption{Nontrivial subdegrees of affine groups of rank $3$ in classes (A2)--(A11).}
\end{table}

\noindent \emph{`Extraspecial' classes (B):} \ $L$ is one of a finite set of permutation groups whose degree is equal to 
one of the following numbers (\cite[Table~1]{Lieb}): 
\begin{equation}\label{eq:L1}
2^6, \, 3^4,  \, 3^6, \, 3^8, \, 5^4, \,  7^2, \, 7^4, \, 13^2, \, 17^2, \, 19^2, \, 23^2, \, 29^2, \, 31^2, \, 47^2. 
\end{equation}

\noindent \emph{`Exceptional' classes (C):} \ $L$ is one of a finite set of permutation groups whose degree is equal to 
one of the following numbers (\cite[Table~2]{Lieb}): 
\begin{equation}\label{eq:L2}
2^6, \, 2^8, \, 2^{11}, \, 2^{12},  \, 3^4,  \,  3^5, \, 3^6, \, 3^{12}, \, 5^4, \,  5^6, \, 
7^4, \, 31^2, \, 41^2, \, 71^2, \, 79^2, \, 89^2. 
\end{equation}

We are going to arrive at a contradiction after comparing the 
subdegrees described in classes (A)--(C)
with our subdegrees in \eqref{subdeg}. 

Suppose that $G$ is in class (A). If $G$ is in class (A1), then $G_\vz$ is isomorphic to a subgroup of $\ggl(1,q^n),$ 
hence $|G_\vz|$ divides $|\ggl(1,q^n)| = hn(q^n-1)$. 
Each subdegree of $G$ divides $|G_\vz|$. In particular, 
$|S_-| \mid |G_\vz|,$ and by \eqref{S-}, 
$p^{h \lceil\frac{n-2}{2}\rceil} \mid hn(q^n-1)$. 
From this we obtain that $p^{m} \le 4m$ where $p$ is an odd prime and $m=h\lceil\frac{n-2}{2}\rceil \ge 2$ (recall that $n\ge 3$ and 
$h\ge 2$ because of $q\ne p$). This, however, contradicts the inequality $p^{m} > 4m,$ which can be easily settled by induction on $m$.

Let $G$ be in class (A$i$) for $i > 1$. As before, let 
$m=h \lceil\frac{n-2}{2}\rceil$. By \eqref{S-}, $p^m$ is the 
largest $p$-power dividing the subdegree $|S_-|,$ and we get 
$2|S_-|/p^m \equiv \pm 1 \pmod q$. Thus 
\begin{equation}\label{modp2}
2|S_-|/p^m \equiv \pm 1\, (\text{mod }p^2).
\end{equation}
Let us compute the residue of $2|S_-|/p^m$ modulo $p^2$ by the help of Table~1. Since $q^n=p^{2r},$  it follows that $2r=hn,$ and hence $r\ge 3$. 
Suppose that $|S_-|$ occurs in the 1st row of 
Table~1. In this case $m=s,$ and hence $s\ge 2$. 
Notice that, if $s \mid r,$ then $s \mid (r-s),$ from which  
$r-s \ge 2$ and this implies that $2|S_-|/p^m \equiv 2 \pmod {p^2},$ contradicting \eqref{modp2}. Now, it follows that $r-s \ge 2$ whenever  $r\ne 4$ and $s\ne 3$. Let $r=4$ and $s=3$. Then $hn=8,$ 
thus $m$ is even, which contradicts that $m=s=3$. 
Now, suppose that $|S_-|$ occurs in the 2nd or the 3rd row of 
Table~1. In this case $m=r-s,$ and if $s\ne 1,$ then 
$2|S_-|/p^m \equiv \pm 2 \pmod {p^2},$ contradicting 
\eqref{modp2}. Let $s=1$. Then $h\frac{n}{2}-1=r-1=m=h\lceil 
\frac{n-2}{2} \rceil$. We obtain that $h=2$ and $n$ is odd. 
Then $q=p^2 \equiv 1 \pmod 4$.  If $|S_-|$ is equal to a number in 
the 2nd row, then by \eqref{S-}, $p^{n+1}-p^{n-1}-p^2+1=2p^{n+1}-2p^n-2p+2,$ and if it is equal to a number in the 3rd row, then 
$p^{n+1}-p^{n-1}-p^2+1=2p^{n+1}-2p^n+2p-2$. 
It is easy to see that none of these equations holds for $n\ge 3$ and 
an odd prime $p$.

Suppose that the group $G$ is in class (B). 
We obtain from \eqref{eq:L1} that $(q,n)=(9,3)$ or  $(9,4)$.
The possibility that $q^n=9^3$ gives rise to a strongly regular graph on $9^3$ points with valency $|S_-|=288$. 
These numbers can be excluded by Brouwer's database of strongly regular graphs (see https://www.win.tue.nl/$\sim$aeb/graphs/srg/srgtab.html). As an alternative way to exclude this case, one may 
use the classification of solvable primitive permutation groups of rank $3$ due to Foulser \cite{F}. Let $q^n=9^4$. Then 
by \cite[Table~13]{Lieb}, the corresponding subdegrees are 
$1440$ and $5120$. 
However, these do not match the numbers given in \eqref{subdeg}.

Finally, suppose  that $G$ is in class (C). 
Then we obtain from \eqref{eq:L2} that $(q,n)\in \{(9,3),(25,3),(81,3),(27,4),(9,6)\}$.
By \cite[Table~14]{Lieb}, the corresponding nontrivial subdegrees are:
\begin{center}
\begin{tabular}{c|c}
$q^n$          & nontrivial subdegrees \\ \hline
$3^6$          &  $224$, $504$ \\ \hline
$5^6$          & $7560,$ $8064$ \\ 
$3^{12}$     &  $65520,$ $465920$ \\ \hline
\end{tabular}
\end{center}
However, none of these matches the numbers in \eqref{subdeg}. 
The lemma is proved.  
\end{proof}

\begin{rem}
We would like to note that in our earlier approach we gave a proof of   
Theorem~\ref{T-main}, which also relies on Lemmas~\ref{L1}-\ref{L3}, but instead of invoking    
Lester's result (Theorem~\ref{L}), we used the results of Iosevich et al.~\cite{ISX} on maximum point sets  
with any two of its points being at distance $0$.  Here we give an outline.
Let $\gamma \in \aut(\F_q^n)$ be an integral automorphism which fixes the zero vector. 
We need to prove that $\gamma$ is a semilinear transformation. By the fundamental theorem of projective geometry we 
are done if we show that  $\gamma$ preserves both the point and the line set of the projective space $\pg(n-1,q)$. 
Let us consider the nonsingular quadric $\Q$ of $\pg(n-1,q)$ induced by the quadratic form 
$x_1^2+\cdots+x_n^2$.  A projective subspace of maximum dimension on $\Q$ is called a \emph{generator} 
(cf.~\cite[Chapter~22]{HT}).  Observe that any subspace $U$ of $\F_q^n$ corresponding to a generator has the property 
that any two of its points are at distance $0$. It follows from \cite[Theorem~2 and Lemma~4]{ISX} that 
$U$ is a maximum point set with the latter property, and thus $\gamma$ maps $U$ to a subspace. The latter subspace is contained 
in $S_0,$ see Lemma~\ref{L3}, and we conclude that $\gamma$ permutes the generators among themselves.
This observation and the fact that any point of $\Q$ can be expressed as the intersection of some generators yield that 
$\gamma$ preserves the set of points on $\Q$. Then, using Lemma~\ref{L2}, we find that any line of $\pg(n-1,q)$ 
through two points of $\Q$ is mapped by $\gamma$ to a line.  If $(n,q) \ne (3,3),$ then any point of $\pg(n-1,q)$ can be expressed 
as the intersection of two lines each connecting two points of 
$\Q,$ and this with the previous observation yield that $\gamma$ preserves the point set of $\pg(n-1,q)$. Then, using again Lemma~\ref{L2}, we conclude that $\gamma$ preserves 
the line set of $\pg(n-1,q)$ as well.

Finally, we would like to mention that 
Theorem~\ref{L} was rediscovered by Vroegindewey 
\cite{V82} (the special case when the associated quadratic 
form has Witt index $1$ was treated already in \cite{V74}).
\end{rem}

\section*{Acknowledgements}
The authors are grateful to Marko Orel for drawing their attention 
to the work of Lester \cite{Les}.
This research was supported in part by the OTKA-ARRS Slovenian-Hungarian Joint Research Project, grant no.\ NN 114614 (in Hungary) and 
N1-0032 (in Slovenia).  The first two authors also thank the Slovenian Research Agency ARRS (research program P1-0285 and research projects  N1-0038, J1-5433, J1-6720 and J1-6743), and the second author was also supported in part by 
WoodWisdom-Net+, W$^3$B.

\end{document}